\newtheorem{thm}{Theorem}
\newtheorem{conj}{Conjecture}
\newtheorem{lemma}[thm]{Lemma}
\newtheorem{cor}[thm]{Corollary}
\newtheorem{claim}{Claim}
\theoremstyle{definition}
\theoremstyle{definition}
\theoremstyle{definition}
\theoremstyle{definition}
\newcommand\spn[1]{\mathrm{span}(#1)}
\def\HH{\mathcal{H}}
\def\K{\mathcal{K}}
\def\P{\mathcal{P}}
\def\N{\mathbb{N}}
\def\RR{\mathbb{R}}
\def\le{\leqslant}
\def\ge{\geqslant}
\def\->{\rightarrow}
\def\<{\langle}
\def\>{\rangle}
\def\x{\mathbf{x}}
\def\t{\mathbf{t}}
\def\0{\mathbf{0}}
\def\1{\mathbf{1}}
\begin{document}

\title{Linear algebra and bootstrap percolation}

\author{J\'ozsef Balogh}
\address{Department of Mathematics\\ University of Illinois\\ 1409 W. Green Street\\ Urbana, IL 61801\\ and\\ Department of Mathematics\\ University of California\\ San Diego, La Jolla, CA 92093}\email{jobal@math.uiuc.edu}

\author{B\'ela Bollob\'as}
\address{Trinity College\\ Cambridge CB2 1TQ\\ England\\ and \\ Department of Mathematical Sciences\\ The University of Memphis\\ Memphis, TN 38152, USA} \email{B.Bollobas@dpmms.cam.ac.uk}

\author{Robert Morris}
\address{IMPA, Estrada Dona Castorina 110, Jardim Bot\^anico, Rio de Janeiro, RJ, Brasil} \email{rob@impa.br}

\author{Oliver Riordan}
\address{Mathematical Institute, University of Oxford, 24--29 St Giles', Oxford OX1 3LB, UK}\email{riordan@maths.ox.ac.uk}

\thanks{Research supported in part by: (JB) NSF CAREER Grant DMS-0745185, UIUC Campus Research Board Grants 09072 and 11067, OTKA Grant K76099, and the TAMOP-4.2.1/B-09/1/KONV-2010-0005 project; (BB) NSF grants DMS-0906634, CNS-0721983 and CCF-0728928, ARO grant W911NF-06-1-0076, and TAMOP-4.2.2/08/1/2008-0008 program of the Hungarian Development Agency; (RM) CNPq bolsa de Produtividade em Pesquisa}
\keywords{Bootstrap percolation, linear algebra, weak saturation}

\begin{abstract}
In $\HH$-bootstrap percolation, a set $A \subset V(\HH)$ of initially
`infected' vertices spreads by infecting vertices which are the only
uninfected vertex in an edge of the hypergraph $\HH$. A particular
case of this is the $H$-bootstrap process, in which $\HH$ encodes
copies of $H$ in a graph $G$. We find the minimum size of a set $A$
that leads to complete infection when $G$ and $H$ are powers of complete
graphs and $\HH$ encodes induced copies of $H$ in $G$. The proof uses linear algebra, a
technique that is new in bootstrap percolation, although standard in
the study of weakly saturated graphs, which are equivalent to (edge)
$H$-bootstrap percolation on a complete graph.
\end{abstract}

\maketitle

\section{Introduction}\label{intro}

Given a hypergraph $\HH$, the \emph{$\HH$-bootstrap process} is defined as follows. Let $A \subset V(\HH)$ be a set of initially `infected' vertices, and, at each time step, infect a vertex $u$ if it lies in an edge of $\HH$ in which all vertices other than $u$ are already infected. To be precise, set $A_0 = A$, and, for each $t \ge 0$, set
\[
 A_{t+1} \, := \, A_t \,\cup\, \big\{ u \,:\, \exists \, S \in \HH \; \textup{ with }\; S \setminus A_t = \{u\} \big\}.
\]
Let $[A]_\HH = \bigcup_{t\ge0} A_t$, and say that $A$ \emph{percolates} (or $\HH$-percolates) if $[A]_\HH = V(\HH)$.

A large family of models of this type was introduced in~\cite{bb11}. Given graphs $G$ and $H$, we obtain the \emph{$H$-bootstrap process} on $G$ by setting $\HH = \{ V(H') \,:\, H' \subset G \textup{ and } H' \cong H \}$. (Sometimes it is more natural to consider 
only induced copies of $H$ in $G$.) The $\HH$- and $H$-bootstrap processes can be seen as special cases of the `cellular automata' introduced by von Neumann (see~\cite{vN}) after a suggestion of Ulam~\cite{Ulam}, and generalize several previously studied models. For example, if $G$ is a (finite) square grid and $H = C_4$  
then we obtain the so-called `Frob\"ose process' (see~\cite{Frob} or~\cite{GHM}).

A fundamental question about bootstrap-type models is the following: given a hypergraph $\HH$ (or a pair $(G,H)$), how large is the smallest percolating set in the $\HH$-bootstrap process? We define
$$m(\HH) \; := \; \min\big\{ |A| \,:\, A \subset V(\HH), \;[A]_\HH = V(\HH) \big\}.$$
Let $K_n^d$ denote the graph with vertex set $[n]^d = \{1,\ldots,n\}^d$ 
in which $uv$ is an edge if $u$ and $v$ differ in exactly one coordinate.
Given $1 \le r \le d$ and $2 \le t\le n$, let $\K(n,d,t,r)$ be the hypergraph
with vertex set $[n]^d$ in which the edges are all sets $S$ of the form $S=I_1\times I_2\times\cdots I_d$
where $r$ of the sets $I_j\subset [n]$ have size $t$ and the others are singletons.
Equivalently,
$$E\big( \K(n,d,t,r) \big) \,:=\, \big\{ S \subset [n]^d \,:\, K_n^d[S] \cong K_t^r \big\},$$
the collection of induced copies of $K_t^r$ in $K_n^d$.
Note that $K_2^r = Q_r$, the $r$-dimensional hypercube.

Our main aim is to determine $m\big( \K(n,d,t,r) \big)$ precisely for every $n\ge t\ge 2$ and $d \ge r \ge 1$. We shall also consider the grid $P_n^d$ with vertex set $[n]^d$, in which two vertices are adjacent if they differ by 1 in one coordinate, and agree in all others. (This graph is usually denoted $[n]^d$, but here this notation would cause confusion.) The corresponding hypergraph $\P(n,d,t,r)$ has as edges all sets
$S=I_1\times I_2\times\cdots I_d$ where $r$ of the $I_j$ are \emph{intervals} of size $t$, and the rest are singletons.
Note that while such sets $S$ induce copies of $P_t^r$ in $P_n^d$, they are not the only induced copies. (There can also be `bent' copies.)
Clearly $\P(n,d,t,r) \subset \K(n,d,t,r)$. As noted in Section~\ref{oliver}, below, the set of points 
\begin{equation}\label{Udef}
U \, = \, \big\{ (u_1,\ldots,u_d)\in [n]^d \,:\, \big| \big\{ i : u_i \ge t \big\} \big| \le r-1 \big\}
\end{equation}
percolates in $\P(n,d,t,r)$, and hence also in $\K(n,d,t,r)$. Our main theorem implies that the set $U$ is extremal in both hypergraphs.

\begin{thm}\label{mainthm}
For every $n \ge t \ge 2$ and $d \ge r \ge 1$,
$$m\big(\K(n,d,t,r) \big) \, = \, m\big(\P(n,d,t,r) \big) \, = \, \sum_{s=0}^{r-1} \binom{d}{s} (t-1)^{d-s} (n+1-t)^s.$$
\end{thm}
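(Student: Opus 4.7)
The plan is to split the theorem into matching upper and lower bounds. The upper bound $m(\K(n,d,t,r)) \le m(\P(n,d,t,r)) \le |U|$ is immediate from $\P \subset \K$, from the Section~\ref{oliver} claim that the set $U$ of~\eqref{Udef} percolates in $\P$, and from the routine count $|U| = \sum_{s=0}^{r-1}\binom{d}{s}(t-1)^{d-s}(n+1-t)^s$ obtained by partitioning $U$ according to which coordinates lie in $\{t,\dots,n\}$. The real work will be the lower bound $m(\K(n,d,t,r)) \ge |U|$, which I will attack via the standard linear-algebra principle for weak-saturation problems: exhibit a map $\phi \colon V(\HH) \to V$ to some vector space such that for every $S \in \HH$ there exist nonzero scalars $(c_v)_{v \in S}$ with $\sum_{v \in S} c_v \phi(v) = 0$. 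Because each newly infected vertex's $\phi$-image then lies in the span of those of its predecessors, this forces $m(\HH) \ge \dim \spn{\{\phi(v) : v \in V(\HH)\}}$.

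\textbf{The map $\phi$.}
To realise $|U|$ as this dimension I propose to build $\phi$ by a tensor construction. For each $j \in [d]$ take $V_j = \RR^n$ with basis $f_{j,0},\dots,f_{j,n-1}$ and split $V_j = L_j \oplus M_j$ with $L_j := \spn{f_{j,0},\dots,f_{j,t-2}}$ and $M_j := \spn{f_{j,t-1},\dots,f_{j,n-1}}$. Set $V := \bigotimes_j V_j$ and
$$W \;:=\; \sum_{J \subset [d],\,|J| \ge r} \Big(\bigotimes_{j \in J} M_j\Big) \otimes \Big(\bigotimes_{j \notin J} V_j\Big).$$
Then $V/W$ has the monomial basis $\big\{\bigotimes_j f_{j,k_j} : |\{j : k_j \ge t-1\}| < r\big\}$ and therefore dimension exactly $|U|$. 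Embed $[n]$ coordinatewise by $\phi_j(i) := \sum_{k=0}^{n-1}\binom{i-1}{k} f_{j,k}$ and put $\phi(v) := \phi_1(v_1) \otimes \cdots \otimes \phi_d(v_d)$, regarded in $V/W$.

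\textbf{Edge relations.}
Next I would verify the required dependencies. In one variable, for every $t$-subset $I \subset [n]$ there must exist nonzero scalars $(c_{j,i})_{i \in I}$ with $\sum_{i \in I} c_{j,i}\,\phi_j(i) \in M_j$: demanding that the $L_j$-component vanish is the $(t-1) \times t$ homogeneous system $\sum_i c_{j,i}\binom{i-1}{k} = 0$ for $k = 0,\dots,t-2$, which is Vandermonde-like and has a one-dimensional nullspace no coordinate of which can vanish (any $t-1$ of the evaluation vectors $(\binom{i-1}{k})_k$ remain independent). For an edge $S = I_1 \times \cdots \times I_d$ of $\K(n,d,t,r)$ with active set $R$ of size $r$, the product coefficients $c_v := \prod_{j \in R} c_{j,v_j}$ are all nonzero and
$$\sum_{v \in S} c_v\, \phi(v) \;=\; \bigotimes_{j \in R}\Big(\sum_{i \in I_j} c_{j,i}\, \phi_j(i)\Big) \otimes \bigotimes_{j \notin R} \phi_j(i_j^*) \;\in\; W,$$
where $i_j^*$ denotes the unique element of $I_j$ for $j \notin R$, since the tensor lies in the $J = R$ summand of $W$.

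\textbf{Main obstacle.}
The tightest step, I expect, will be checking that $\{\phi(u) : u \in U\}$ is linearly independent in $V/W$, so that the image of $\phi$ really has dimension $|U|$. Expanding $\phi(v) \bmod W$ in the $f$-basis reduces this to non-singularity of the $|U| \times |U|$ matrix with entries $\prod_j \binom{v_j - 1}{k_j}$, whose rows are indexed by $v \in U$ and columns by the admissible tuples $(k_1,\dots,k_d)$; the two index sets are identified via $k_j = v_j - 1$. Since $\binom{v-1}{k}=0$ for $k > v-1$ and $\binom{v-1}{v-1}=1$, this matrix is lower-triangular with unit diagonal under any linear extension of the componentwise partial order on $[n]^d$, hence invertible. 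Combined with the linear-algebra principle above this yields $m(\K(n,d,t,r)) \ge |U|$ and completes the proof.
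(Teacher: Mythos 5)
Your proposal is correct, and while its skeleton matches the paper's (upper bound from the percolating set $U$, lower bound from the linear-algebra principle, which is exactly the paper's Lemma~\ref{ldep}), the construction witnessing the lower bound is genuinely different. The paper works directly in a space with basis $\{e_u : u \in U\}$ and defines $f_v = \sum_P f_v^{(P)}$ as a sum over all $(d-r+1)$-subsets $P$ of coordinates of ``projections of $v$ onto thickened $(r-1)$-dimensional faces,'' with coefficients taken from a generic $n\times(t-1)$ matrix whose top block is the identity; the edge dependency is then checked separately for each $P$ by partitioning the copy of $K_t^r$ into lines in a direction $k\in P\cap D(S)$, and the spanning of the space by an induction on the coordinate sum. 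You instead realize the target space as a quotient $V/W$ of a $d$-fold tensor product and send $v$ to a pure tensor of binomial-coefficient vectors, which is closer in spirit to the Alon--Kalai algebra for weak saturation. What your route buys: the edge relation follows in one line from multilinearity (each active factor lands in $M_j$, so the product lands in the $J=R$ summand of $W$ --- no sum over $P$ and no line-partition argument), and the rank computation reduces to unitriangularity of the matrix $\bigl(\prod_j \binom{v_j-1}{u_j-1}\bigr)$ with respect to the componentwise order, since $\binom{v-1}{k}=0$ for $k>v-1$ and $\binom{v-1}{v-1}=1$. What the paper's route buys: no tensor products or quotients, an explicit vector family in a space of dimension exactly $|U|$, and a generic matrix $M$ that makes the inhomogeneous extension (Theorem~\ref{genthm}) immediate --- though your explicit matrix $\binom{i-1}{k}$ has both properties the paper demands of $M$ (any $t-1$ rows independent, via column-equivalence to a Vandermonde matrix; unitriangular top block), so your version adapts just as well. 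Two small housekeeping points: the assertion that $U$ percolates in $\P(n,d,t,r)$ still needs the short induction on $|v|$ (for $v\notin U$ use $r$ large coordinates to build a copy of $P_t^r$ with $v$ as top vertex), and one should note explicitly that the admissible monomials of $V/W$ are in bijection with $U$ via $k_j=u_j-1$, which you do. With those spelled out, the argument is complete.
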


The special case $r = d$ of this result was proved over 25 years ago by Alon~\cite{alon} using techniques from exterior algebra. In this case $U$ is simply the set of points in which at least one coordinate is in the range $1$ up to $t - 1$.

\begin{cor}[{\cite[Theorem~3.4]{alon}}]\label{aloncor}
For every $n \ge t \ge 2$ and $d \ge 1$ we have
$$m\big(\K(n,d,t,d) \big) \, = \, m\big(\P(n,d,t,d) \big) \, = \, n^d - ( n + 1 - t )^d.$$
\end{cor}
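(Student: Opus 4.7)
The upper bound is immediate: the set $U$ in~\eqref{Udef} has size $n^d-(n+1-t)^d$ (its complement is the ``top corner'' $\{t,\ldots,n\}^d$), and percolates in $\P(n,d,t,d)\subset\K(n,d,t,d)$. For the lower bound I plan to use the linear-algebra technique which, as the authors indicate, is standard in weak saturation: exhibit a vector space $V$ of dimension $D:=n^d-(n+1-t)^d$ together with vectors $w_v\in V$ ($v\in[n]^d$) such that (i) the $w_v$ span $V$, and (ii) for every edge $S$ of $\K(n,d,t,d)$ there exist \emph{nonzero} scalars $c_v$ ($v\in S$) with $\sum_{v\in S} c_v w_v=0$. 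A one-line induction on the $\K$-bootstrap process then shows that if $A$ $\K$-percolates, $\{w_v:v\in A\}$ spans all of $V$: when $u$ is infected via an edge $S$, (ii) expresses $w_u$ as a combination of $w_v$ for $v\in S\setminus\{u\}$, which lie in the previous span. Hence $|A|\ge D$. Since $\P\subset\K$ as hypergraphs, the same bound passes to $m(\P(n,d,t,d))$.

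\textbf{Construction.} Inside $\RR[x_1,\ldots,x_d]$, let $\M$ be the span of the monomials $x_1^{\alpha_1}\cdots x_d^{\alpha_d}$ with every $\alpha_i\in\{0,\ldots,n-1\}$ and $\min_i\alpha_i\le t-2$; the complementary monomials (all $\alpha_i\ge t-1$) are indexed by $\{t-1,\ldots,n-1\}^d$, giving $\dim\M=D$. Take $V:=\M^*$ and $w_v(p):=p(v)$. Property (i) holds because $\M$ sits inside the $n^d$-dimensional space of polynomials of degree $<n$ in each variable, on which $[n]^d$ is an interpolating set, so no nonzero $p\in\M$ vanishes on all of $[n]^d$. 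For (ii), fix an edge $S=I_1\times\cdots\times I_d$ with each $|I_i|=t$. The $(t-1)\times t$ Vandermonde matrix $\big(j^a\big)_{a<t-1,\,j\in I_i}$ has a one-dimensional kernel, and any nonzero $(c^{(i)}_j)_{j\in I_i}$ in this kernel has every coordinate nonzero (removing any column leaves a nonsingular square Vandermonde). Setting $c_v:=\prod_i c^{(i)}_{v_i}$, which is nonzero for every $v\in S$, one computes, for any $x^\alpha\in\M$,
\[
\sum_{v\in S} c_v\, v^\alpha \;=\; \prod_{i=1}^d \Big(\sum_{j\in I_i} c^{(i)}_j\, j^{\alpha_i}\Big) \;=\; 0,
\]
since some $\alpha_i\le t-2$ forces the corresponding factor to vanish. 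By linearity $\sum_{v\in S} c_v w_v=0$ in $\M^*$, establishing (ii).

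\textbf{Main obstacle.} The heart of the argument is the choice of $\M$: restricting to monomials with \emph{some} exponent $\le t-2$ is exactly what a product of one-variable Vandermonde coefficient vectors annihilates, while the complement---monomials indexed by $\{t-1,\ldots,n-1\}^d$---is counted by $(n+1-t)^d$, matching the claimed correction term. Once this alignment is spotted, the rest is a direct check. (When $r<d$ only $r$ coordinates vary along an edge, so a pure product dependency kills far fewer monomials, explaining why the general Theorem~\ref{mainthm} requires the more elaborate construction developed in the remainder of the paper.)
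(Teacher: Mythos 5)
Your proof is correct. Both the upper bound (the set $U$ of \eqref{Udef} percolates) and the skeleton of the lower bound (exhibit spanning vectors admitting a nowhere-zero linear dependence on every edge, then induct along the infection order) coincide with the paper's Lemma~\ref{ldep}. But your construction of the vectors is genuinely different from the paper's, which obtains the corollary only as the special case $r=d$ of Theorem~\ref{mainthm}: there one takes an abstract space with basis $\{e_u: u\in U\}$, maps each $v$ to a sum over coordinate directions of its "projections" into the slab of small values using a generic matrix $M$, and verifies the dependence line by line inside each edge. You instead work in the dual of an explicit monomial space (exponent vectors with some coordinate at most $t-2$), take $w_v$ to be evaluation at $v$, and get the dependence from a clean tensor-product identity: the coefficient vector is a product of one-variable Vandermonde kernel vectors, and a single small exponent kills the whole product. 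This buys a shorter, fully concrete and self-contained argument for $r=d$ (closer in spirit to Alon's original proof and to the polynomial method generally), with spanning reduced to the standard fact that $[n]^d$ is an interpolating set for polynomials of degree less than $n$ in each variable. What it does not buy is the general case: as you correctly observe, when $r<d$ only $r$ coordinates vary along an edge, so the pure product dependency annihilates too few monomials, and the paper's sum-over-projections construction (with its coefficient matrix $M$ and the line-by-line cancellation of Claim~3) is exactly what is needed to repair this.
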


We remark that Alon's Theorem was phrased in terms of edge percolation in complete multi-partite hypergraphs\footnote{Translating from edges in a hypergraph to vertices in the line graph maps his result to Corollary~\ref{aloncor}.}, and was somewhat more general than Corollary~\ref{aloncor}. Indeed, given a vector $\t = (t_1,\ldots,t_d) \in \N^d$, let $\K^*(n,d,\t)$ denote the hypergraph on $[n]^d$ whose edges are copies of $K_{t_1} \times \ldots \times K_{t_d}$ which have `length' $t_j$ in direction $j$ for each $1 \le j \le d$. That is, those copies which sit `as we expect' and with a prescribed orientation. In~\cite{alon}, Alon determined $m\big( \K^*(n,d,\t) \big)$ for every $n,d \in \N$ and $\t \in \N^d$; thus Theorem~\ref{mainthm} can be seen as the natural analogue of Alon's Theorem when we allow the copies of $K_t^r$ to be oriented in any direction. In Section~\ref{discuss} we present a result that generalizes both Alon's result and Theorem~\ref{mainthm}. 

Before turning to the proof of Theorem~\ref{mainthm}, let us give a little context. The first extremal result related to bootstrap percolation was proved by Bollob\'as~\cite{Bela68}, and phrased in the language of `weakly saturated graphs'. This is the natural edge version of the  $H$-bootstrap percolation we have just defined (infect an edge if it is the last uninfected edge of a copy of $H$), with $G$ complete. The main aim of \cite{Bela68} was to pose a conjecture concerning 
the extremal number when $H=K_k$ and $G=K_n$.
This conjecture was proved by Alon~\cite{alon}, Frankl~\cite{Fr} and Kalai~\cite{Kalai},
using linear algebraic methods.  

The $\HH$-bootstrap process is named after a closely related model, known as $r$-neighbour bootstrap percolation, which was introduced in 1979 by Chalupa, Leath and Reich~\cite{CLR} as a model of disordered magnetic systems. In this process, a vertex of a graph $G$ becomes infected when it has at least $r$ infected neighbours, and infected vertices remain infected forever. We remark that this is similar to $H$-bootstrap percolation with $H$ a star, except that a given copy of $H$ can only be responsible for infecting its central vertex; it is thus a special case of the natural `directed' version of $\HH$-bootstrap percolation, in which each edge can infect only a single specified vertex. The $r$-neighbour bootstrap process has been extensively studied by mathematicians and statistical physicists (see~\cite{AL,BBDM,Hol}, for example, and the references therein). For further background see Bollob\'as~\cite{bb11}.

In $r$-neighbour bootstrap percolation, one is mainly interested in estimating the 
\emph{critical threshold} in the random setting: if the initially infected set $A$ is formed by
selecting vertices independently with probability $p$, for which $p$ is
it likely that eventually all vertices are infected?  In the study of
this probabilistic question, extremal results turn out to be important
(see~\cite{BBMhigh} or~\cite{GHM}, for example). One of our main
motivations in this work is to approach the following tantalizing open
problem, which is our main stumbling block in attacking the
probabilistic question on the hypercube. Let $m(G,r)$ denote the
minimum size of a percolating set in $r$-neighbour bootstrap
percolation on $G$. In \cite{BB}, Balogh and Bollob\'as made the
following conjecture.

\begin{conj}\label{BBconj}
 Let $r \ge 3$ be fixed. Then
 $$m(Q_d,r) \; = \; \left( \frac{1}{r} + o(1) \right) \binom{d}{r-1}$$
 as $d \to \infty$.
\end{conj}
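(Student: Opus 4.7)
The plan is to establish the matching upper and lower bounds separately. The upper bound $m(Q_d,r) \le (1/r + o(1))\binom{d}{r-1}$ is the easier direction and should follow from an explicit construction in which the initial set consists mostly of vertices at Hamming weight $r-1$, arranged to form an efficient covering. Writing $Q_d$ as two copies of $Q_{d-1}$ joined by a perfect matching, I would try a recursive construction: given near-optimal percolating sets in $Q_{d-1}$ for thresholds $r-1$ and $r$, glue them across the matching so that each half assists the other in saturating. A complementary probabilistic approach, selecting weight-$(r-1)$ vertices independently at density $1/r + o(1)$, is plausible because each weight-$r$ vertex has exactly $r$ weight-$(r-1)$ neighbours, leaving a factor-$1/r$ slack to be exploited by a cascade analysis.

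The heart of the conjecture is the lower bound $m(Q_d,r) \ge (1/r - o(1))\binom{d}{r-1}$, and this is where the main obstacle lies. Direct weight-function or energy arguments naturally give bounds of the right \emph{order} $\binom{d}{r-1}$ but with a suboptimal constant. My plan would be to try to adapt the linear-algebraic framework of the present paper: assign to each vertex $v \in V(Q_d)$ a vector $\varphi(v)$ in some $\FF$-vector space such that whenever $v$ is forced to be infected by $r$ of its neighbours $u_1,\ldots,u_r$, one has $\varphi(v) \in \spn{\varphi(u_1),\ldots,\varphi(u_r)}$. Then $|A| \ge \dim\spn{\varphi(v) : v \in V(Q_d)}$, and the task becomes to construct $\varphi$ whose image spans a space of dimension close to $\binom{d}{r-1}/r$.

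The principal obstacle is that $r$-neighbour bootstrap percolation is inherently \emph{directed}: any $r$ of $v$'s $d$ neighbours may trigger its infection, so $\varphi$ must satisfy the span condition for \emph{every} choice of $r$ out of $d$ neighbours of $v$, not merely for one prescribed hyperedge as in the models treated in this paper. This constraint is so much stronger that a naive application forces $\varphi(v)$ into the span of all $d$ neighbours of $v$, losing a factor far from $1/r$. Consequently I expect a genuinely new ingredient will be needed, perhaps a sharp isoperimetric or shadow inequality on the middle levels of $Q_d$ combined with induction on $d$, or a two-scale argument that exploits the $r=2$ result of Balogh--Bollob\'as on subcubes. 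As a sanity check I would first attempt to recover the known $r=2$ case by the proposed linear-algebraic route before tackling $r=3$, where even improving the current constant would itself be a substantial step forward.
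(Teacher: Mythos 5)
This statement is Conjecture~\ref{BBconj}, which the paper presents as an \emph{open problem}: it gives no proof (it only records the known upper-bound construction), and what you have written is a research programme rather than a proof. As a proof it has a fatal gap: the lower bound is entirely absent. Worse, your framing of that gap misstates the state of the art. You assert that direct weight-function or energy arguments ``naturally give bounds of the right order $\binom{d}{r-1}$ but with a suboptimal constant''; the paper says explicitly that for fixed $r \ge 3$ no super-linear lower bound on $m(Q_d,r)$ is known --- not even $\Omega(d^2)$, let alone $\Omega(d^{r-1})$. So the missing ingredient is not a constant-factor refinement but the entire polynomial order. Your diagnosis of why Lemma~\ref{ldep} does not transfer is correct (the $r$-neighbour rule is directed: the dependency $\varphi(v)\in\spn{\varphi(u_1),\ldots,\varphi(u_r)}$ would have to hold for \emph{every} $r$-subset of the $d$ neighbours of $v$, which forces $\varphi(v)$ into far too small a quotient), but identifying the obstacle is not overcoming it, and you offer no concrete candidate for $\varphi$ nor for the alternative isoperimetric route.

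On the upper bound, which is known: the construction the paper cites takes a Steiner system $S(r-1,r,d)$ at level $r$ --- that is, $\binom{d}{r-1}/r$ vertices of Hamming weight $r$ --- together with all $\binom{d}{r-2}$ vertices of weight $r-2$. Every weight-$(r-1)$ vertex then has $r-1$ infected down-neighbours and exactly one infected up-neighbour, so level $r-1$ becomes infected, then level $r$, and the infection spreads to all levels. Your version places the bulk of the initial set at level $r-1$ rather than level $r$, and the probabilistic variant does not work as described: a weight-$r$ vertex has exactly $r$ neighbours of weight $r-1$ and needs \emph{all} $r$ of them infected before anything at level $r+1$ can help, which at independent density $1/r$ happens with probability $r^{-r}$, so no cascade starts. (One should also note that exact Steiner systems need not exist for every $d$, so the construction really uses an asymptotically optimal covering design, e.g.\ via the R\"odl nibble, which still yields the $(1/r+o(1))\binom{d}{r-1}$ bound.)
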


The upper bound in Conjecture~\ref{BBconj} follows by taking a Steiner system at level $r$, together with all of level $r-2$. Amazingly, we know of no super-linear lower bound. In the case $r = 2$ the situation is simpler, and $m(P_n^d,2)$ is known exactly for all $n$ and $d$ (see~\cite{BB} or~\cite{BBMhigh}). At the other end of the range, Pete (see~\cite{BP}) observed that $m(P_n^d,d) = n^{d-1}$.  However, for fixed $2 < r < d$, $m(P_n^d,r)$ is known only up to a constant factor that depends on $d$. 

Finally, we remark that the random questions are also interesting in the $H$-bootstrap model, and that some of the basic problems (in the `edge version') are solved in~\cite{BBMgb} by the first three authors. As the reader might guess, however, there are still many more open problems than theorems.

The rest of this note is arranged as follows.
In Section~\ref{oliver} we prove Theorem~\ref{mainthm}, and in Section~\ref{discuss} we discuss
an inhomogeneous extension.

\section{Proof of Theorem~\ref{mainthm}}\label{oliver}

The proof of Theorem~\ref{mainthm} is based on the following observation.

\begin{lemma}\label{ldep}
Let $\HH$ be an arbitrary hypergraph. Suppose that we can find a vector space $W$
spanned by vectors $\{ f_v : v \in V(\HH)\}$ such that, for every edge $S \in \HH$,
we have a linear dependence $\sum_{v \in S} \lambda_{S,v} f_v=0$ with all coefficients $\lambda_{S,v}$ non-zero.
Then
\[
 m(\HH) \ge \dim W.
\]
\end{lemma}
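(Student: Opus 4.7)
The plan is to show that the vectors indexed by any percolating set $A$ already span $W$, which immediately gives $|A| \ge \dim W$ because a spanning set cannot be smaller than the dimension.

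First I would fix a percolating set $A \subset V(\HH)$ of minimum size $m(\HH)$ and consider the subspace $W_A := \spn{f_v : v \in A} \subseteq W$. The goal is to prove by induction on $t$ that $f_v \in W_A$ for every $v \in A_t$. The base case $t = 0$ is trivial since $A_0 = A$. For the inductive step, suppose $v \in A_{t+1} \setminus A_t$. By definition of the bootstrap process there exists an edge $S \in \HH$ with $S \setminus A_t = \{v\}$; in other words every other vertex of $S$ is already infected by time $t$ and hence, by the induction hypothesis, $f_u \in W_A$ for all $u \in S \setminus \{v\}$.

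Now I invoke the hypothesis of the lemma: the edge $S$ comes with a linear dependence $\sum_{u \in S} \lambda_{S,u} f_u = 0$ in which every coefficient is nonzero. Since $\lambda_{S,v} \ne 0$, I can solve for $f_v$ and write
\[
 f_v \,=\, -\frac{1}{\lambda_{S,v}} \sum_{u \in S \setminus \{v\}} \lambda_{S,u} f_u,
\]
which exhibits $f_v$ as a linear combination of vectors already in $W_A$. Thus $f_v \in W_A$, completing the induction. Because $A$ percolates, $[A]_\HH = V(\HH)$, and so $f_v \in W_A$ for every $v \in V(\HH)$. But the full collection $\{f_v : v \in V(\HH)\}$ spans $W$, so $W_A = W$, giving $|A| \ge \dim W$.

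There is really no obstacle here: the argument is a one-step linear-algebraic induction, with the nonvanishing of every $\lambda_{S,v}$ being exactly what is needed to express a newly infected $f_v$ in terms of the previously infected $f_u$'s. The subtlety, of course, lies not in proving the lemma but in \emph{producing} a suitable space $W$ and vectors $\{f_v\}$ of large dimension with the required dependences on every edge of $\K(n,d,t,r)$; that construction will be the real content of the proof of Theorem~\ref{mainthm}.
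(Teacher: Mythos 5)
Your proposal is correct and is essentially the same argument as the paper's: both use the nonvanishing coefficient $\lambda_{S,v}$ to express the vector of each newly infected vertex as a linear combination of vectors of already-infected vertices, concluding that $\{f_v : v \in A\}$ spans $W$ and hence $|A| \ge \dim W$. The only cosmetic difference is that you induct on the time steps $A_t$ while the paper orders the vertices of $V(\HH)\setminus A$ one at a time.
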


\begin{proof}
Once one thinks of the statement, the proof is essentially immediate. Indeed, suppose that $A \subset V(\HH)$ percolates in the $\HH$-process. Then we can order the vertices $v_1,\ldots,v_\ell$ in $V(\HH) \setminus A$ so that each $v_i$ is in an edge $S_i$ of $\HH$
in $A_i = A \cup \{ v_j : j\le i \}$. Let $W_i$ be the span of the vectors $\{f_v : v\in A_i\}$.
For each $i\ge 1$, the dependency condition for $S_i$ and that fact all other vertices of $S_i$ are in $A_{i-1}$
together imply that $f_{v_i}$ is a linear combination of vectors in $W_{i-1}$. Indeed, there exist $\lambda_{S_i,v}$, one for each $v \in S_i$ and all non-zero, such that
$$f_{v_i} \, = \, - \frac{1}{\lambda_{S_i,v_i}} \sum_{v \in S_i \setminus \{v_i\}} \lambda_{S_i,v} f_v.$$
Thus $W_i = W_{i-1}$ and hence $W_\ell = W_0$. By assumption, $A_\ell = V(\HH)$, so $W_\ell = W$. Since $W_0$ is spanned by $|A|$ vectors, we have $|A| \ge \dim W$.
\end{proof}

To prove Theorem~\ref{mainthm}, we must find the right vectors. Since the notation in the formal proof
may perhaps obscure the ideas, we first outline some special cases. Given $v \in [n]^d$ we write
$|v|$ for the sum of the coordinates of $v$.

Call a coordinate $i$ of a vector $v$ \emph{large} if $v_i \ge t$ and \emph{small} otherwise, and let
$$U \, = \, \big\{ (u_1,\ldots,u_d)\in [n]^d \,:\, \big| \big\{ i : u_i \ge t \big\} \big| \le r-1 \big\}$$
be the set of all $v \in [n]^d$ with at most $r-1$ large coordinates, as in~\eqref{Udef}. We begin by assigning to each vertex $u \in U$ an abstract vector $e_u$, and assume that the vectors $\{e_u : u\in U\}$ are linearly independent. 

Suppose first that $t = 2$ and $r = d$, so the edges $S$ of our hypergraph $\HH$ are hypercubes $Q_d$ of full dimension in $[n]^d$, and $U = \{\x \in [n]^d : \min_j x_j = 1\}$ is a union of $(d-1)$-dimensional faces of $[n]^d$. Now, for each $v \in [n]^d$ and $k \in [d]$, let $\pi_k(v)$ denote the projection of $v$ onto the face $\{\x \in [n]^d : x_k = 1\}$, and set 
$$f_v \, = \, \sum_{k=1}^d e_{\pi_k(v)}.$$
We claim that the dependency condition in Lemma~\ref{ldep} holds with $\lambda_{S,v} = \pm 1$ for each $S$ and $v$, simply by letting the sign alternate over the vertices of the cube $S$ in the obvious way. To see this, consider the contribution to 
$$\sum_{v \in S} \lambda_{S,v} f_v=\sum_{v\in S} \lambda_{S,v} \sum_{k=1}^d e_{\pi_k(v)} \, = \, \sum_{k=1}^d \sum_{v\in S} \lambda_{S,v} e_{\pi_k(v)}$$ 
from terms with a given value of $k$. The vertices of $S$ can be grouped into pairs $\{v,v'\}$ differing only in the $k^{th}$ coordinate, and so with $\pi_k(v) = \pi_k(v')$. The choice
of sign ensures that the corresponding contributions to the sum cancel. One must also check that the vectors $\{ f_v : v \in [n]^d \}$ 
have the same span as the vectors $\{ e_u : u \in U\}$; this follows from the fact that, for each $v \in U$, $f_v$ is equal to $c_v e_v$ plus a sum of terms involving $e_u$ with $|u| < |v|$, where $c_v > 0$ is the number of small coordinates of $v$.

The case $t = 2$ and general $1 \le r \le d$ is not much harder: now $U$ consists of the $(r-1)$-dimensional faces $\{\x \in [n]^d : x_{j_1} = \ldots = x_{j_{d-r+1}} = 1\}$, and we define $\pi_T(v)$ to be the projection onto the face indexed by $T = \{j_1,\ldots,j_{d-r+1} \}$, and set
$$f_v \, = \, \sum_{T} e_{\pi_T(v)},$$
where the sum runs over all sets $T \subset [d]$ of size $d - r + 1$.  Setting $\lambda_{S,v} = \pm 1$ for each $S$ and $v$, exactly as before, we can again group the vertices of $S$ into pairs that project to the same point; the proof is now exactly as above.

Now suppose that $r = d$, so we are back to projecting out single coordinates, but that $t \ge 3$. In order to define the vectors $f_v$, consider first a single coordinate $k$, and a line $v(1),\ldots,v(n)$ of points of $[n]^d$ differing only in the $k^{th}$ coordinate, with $v(i)$ having $k^{th}$ coordinate $i$. We shall set
$$f_v = \sum_{k=1}^d f^{(k)}_v,$$
where each $f^{(k)}_{v(i)}$ is a linear combination of the vectors $e_{v(1)},\ldots,e_{v(t-1)}$. For those $v(i)$ with $k^{th}$ coordinate small (i.e., $i \le t-1$), we just take $f^{(k)}_{v(i)} = e_{v(i)}$; for those with $k^{th}$ coordinate large, we want to choose the linear combinations such that the following holds: 
\begin{equation}\label{keyprop}
\textup{For any $i_1 < \ldots < i_t$, the vectors $f^{(k)}_{v(i_j)}$ are `minimally' dependent,}
\end{equation}
meaning that they satisfy a linear dependence with all coefficients non-zero.
We remark that the vectors $v(i_1), \ldots, v(i_t)$ will be $t$ points in a line in an edge of $\HH$, i.e., a copy of $K_t^r$. It is clearly possible to choose the linear combinations so that~\eqref{keyprop} holds, simply by choosing the linear combinations to be `in general position'. Now, given an edge $S$ of $\HH$, we take the coefficients of these dependencies (which we take to be the same for all lines in direction $k$) as one factor contributing to $\lambda_{S,v}$; there is a similar factor for each coordinate $k$. (Note that this is in fact exactly what we did in the case $t = 2$: there, for a given $k$, each $f^{(k)}_{v(i)}$ is simply $e_{v(1)}$, and the coefficients of our dependency are $\pm 1$.) The proof now follows as before.

In the fully general case we sum over all projections onto `thickened' $(r-1)$-dimensional faces. When
projecting out $d - r + 1 \ge 2$ coordinates, we choose coefficients for the $(t-1)^{d-r+1}$ allowed image
vectors by multiplying the coefficients associated to projecting out a single coordinate; a formal
description follows.

\begin{proof}[Proof of Theorem \ref{mainthm}]
Fix $n\ge t\ge 2$ and $d \ge r \ge 1$, and set $\K = \K(n,d,t,r)$ and $\P = \P(n,d,t,r)$.
As above, for $v = (v_1,\ldots,v_d) \in [n]^d$ we say that coordinate $i$ of $v$ is \emph{large} if $v_i \ge t$ and \emph{small} otherwise. As in the statement of the theorem, let $U$ be the set of all $v\in [n]^d$ with at most $r-1$ large coordinates, and note that 
$$|U| \, = \, \sum_{s=0}^{r-1}\binom{d}{s} (t-1)^{d-s} (n-t+1)^s.$$
Suppose that the set of initially infected vertices is exactly $U$. Then every vertex $v$ is eventually infected in the $\P$-process. Indeed, for $v\notin U$ we can use any $r$ large coordinates of $v$ to construct a copy $H$ of $P_t^r$ in $P_n^d$ with $v$ as the `top' vertex, i.e., with $|u| < |v|$ for all other vertices $u$ of $H$. It follows by induction on $|v|$ that all $v$ are infected eventually. Hence, since $\P \subset \K$, 
\begin{equation}\label{ub1}
m(\K) \,\le\, m(\P) \, \le \, |U|.
\end{equation}

For the lower bound let $W$ be a (real) vector space with basis $\{e_u : u \in U\}$, so $\dim W = |U|$. (Here the $e_u$ are simply abstract linearly independent vectors.)
By Lemma~\ref{ldep} it suffices to define vectors $\{f_v:v\in [n]^d\}$ with $f_v\in W$ in such a way that $(i)$ $\{f_v\}$ spans $W$ and $(ii)$ for every $S\subset [n]^d$ which is an edge of $\K(n,d,t,r)$ 
there are non-zero coefficients $\lambda_{S,v}\in \RR$ such that
\begin{equation}\label{dep}
 \sum_{v\in S} \lambda_{S,v}f_v =0.
\end{equation}

Let us fix once and for all an $n \times (t-1)$  matrix $M = (m_{ij})_{1\le i\le n, \,1\le j\le t-1}$ with the following properties:
the first $t-1$ rows of $M$ form the identity matrix, all entries of $M$ are non-negative,
and any $t-1$ rows of $M$ are linearly independent. Such matrices clearly exist: in constructing the next
row (after the first $t-1$) we just avoid the union of a finite number of $(t-2)$-dimensional subspaces.
Since any $t$ rows of $M$ are dependent, but no $t-1$ rows are, for any subset $I \subset [n]$ with $|I|=t$ there are \emph{non-zero} coefficients $(\lambda_{I,i})_{i\in I}$ such that 
\begin{equation}\label{Idef}
 \sum_{i\in I} \lambda_{I,i} m_{ij} =0 \hbox{\quad for each \quad} j=1,2,\ldots,t-1.
\end{equation}

Given a point $v=(v_1,\ldots,v_d) \in [n]^d$, a coordinate $1\le k\le d$ and a `value' $j\in [n]$, let $\pi^k_j(v)$ denote
the point $(v_1,\ldots,v_{k-1},j,v_{k+1},\ldots,v_d)$ obtained by starting from $v$ and setting the value
of the $k^{th}$ coordinate to $j$. Similarly, for \emph{distinct} $k_1,k_2,\ldots,k_p\in [d]$ and arbitrary
$j_1,\ldots,j_p\in [n]$ let 
$$\pi^{k_1,\ldots,k_p}_{j_1,\ldots,j_p}(v) \, = \, \pi^{k_1}_{j_1}( \dots \pi^{k_p}_{j_p} (v)\dots)$$ 
be the point obtained from $v$ by setting
the value of coordinate $k_i$ to $j_i$ for $1\le i\le p$.

Let $P=\{k_1,\ldots,k_p\}$ be any subset of $[d]$ with cardinality $p=d-r+1$; these will be
the coordinates `projected out'.
For $v\in [n]^d$, define
\begin{equation}\label{fvp}
 f^{(P)}_{v} = \sum_{j_1=1}^{t-1}\cdots\sum_{j_p=1}^{t-1} \prod_{a=1}^p m_{i_a j_a} e_{ \pi^{k_1,\ldots,k_p}_{j_1,\ldots,j_p}(v) },
\end{equation}
where $i_a=v_{k_a}$ is the original value of the $a^{th}$ coordinate which is projected out; we shall see in a moment
that this defines a vector in $W$.  We set
\begin{equation}\label{fv}
 f_v = \sum_P f^{(P)}_{v},
\end{equation}
where the sum is over all $P\subset [d]$ with $|P|=p$.

We shall break up the remaining calculation into three simple claims.

\begin{claim}
$f_v\in W$ for each $v \in [n]^d$.
\end{claim}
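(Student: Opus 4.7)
The plan is to show that every summand $e_{\pi^{k_1,\ldots,k_p}_{j_1,\ldots,j_p}(v)}$ appearing in \eqref{fvp} is actually one of the basis vectors $e_u$ of $W$, i.e. that the projected point lies in $U$. Once this is established, each $f^{(P)}_v$ is visibly a real linear combination of basis vectors of $W$, so $f^{(P)}_v \in W$, and therefore by \eqref{fv} so is $f_v$.

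The key observation is a counting argument on large coordinates. In \eqref{fvp} we have $p = d - r + 1$ and the indices $j_1,\ldots,j_p$ each range over $\{1,\ldots,t-1\}$. By the definition of $\pi^{k_1,\ldots,k_p}_{j_1,\ldots,j_p}(v)$, its $k_a$-th coordinate equals $j_a \le t-1$, which is small. So every coordinate in the set $P$ of size $p$ is small in the projected point, leaving at most $d - p = r - 1$ coordinates (those outside $P$) that can be large. Hence $\pi^{k_1,\ldots,k_p}_{j_1,\ldots,j_p}(v) \in U$ and $e_{\pi^{k_1,\ldots,k_p}_{j_1,\ldots,j_p}(v)}$ is a basis vector of $W$.

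I expect this claim to be essentially a one-line verification — there is no real obstacle, only the need to unwind the definitions carefully and to recall that $|P| = d - r + 1$ is precisely the number chosen so that projecting out $|P|$ coordinates to small values forces the result into $U$.
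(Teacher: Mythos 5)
Your argument is correct and is exactly the paper's proof: the projected-out coordinates take small values $j_a \le t-1$, so at most $d-p = r-1$ coordinates of the image point can be large, placing it in $U$. Nothing further is needed.
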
 

\begin{proof}
Let $v \in [n]^d$ and $P \subset [d]$ with $|P| = d - r + 1$, and suppose that $e_u$ appears in the sum in \eqref{fvp}, with $u = \pi^{k_1,\ldots,k_p}_{j_1,\ldots,j_p}(v)$. Then the coordinates which are projected out (the $k_a$) take values (the $j_a$) that are small. Since only $d-p=r-1$ coordinates are not
projected out, $u$ has at most $r-1$ large coordinates, so $u\in U$, and $e_u\in W$. Thus $f^{(P)}_{v}\in W$ and hence $f_v\in W$.
\end{proof}

\begin{claim}
The vectors $\{f_v: v\in [n]^d\}$ span $W$.
\end{claim}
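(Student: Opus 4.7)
The plan is to prove that each basis vector $e_u$, for $u \in U$, lies in the span of $\{f_v : v \in [n]^d\}$; since $\{e_u\}_{u\in U}$ is a basis of $W$, this will imply the claim. In fact I will use only the vectors $\{f_u : u \in U\}$, and argue by induction on the number of large coordinates of $u$, showing for each such $u$ that $f_u$ equals a strictly positive scalar multiple of $e_u$ plus a linear combination of $e_{u'}$ for various $u'$ with strictly fewer large coordinates than $u$.

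The key structural observation will be that, because the first $t-1$ rows of $M$ form the identity, the inner sum over $j_a$ in~\eqref{fvp} collapses to the single term $j_a = i_a$ whenever the projected coordinate $k_a$ is small in $u$ (that is, $u_{k_a} = i_a \le t-1$). Consequently, if $P$ consists entirely of small coordinates of $u$, then $f_u^{(P)} = e_u$ exactly; and if $P$ contains at least one large coordinate of $u$, then every $e_w$ appearing in $f_u^{(P)}$ is indexed by a point $w$ which agrees with $u$ off $P$ and takes values in $[t-1]$ throughout $P$. Such a $w$ has strictly fewer large coordinates than $u$, and still lies in $U$, so these contributions are lower-order for the induction.

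To close the induction, let $s$ denote the number of large coordinates of $u \in U$, so $s \le r-1$ and $u$ has $d-s \ge p = d-r+1$ small coordinates. The number of $p$-subsets $P$ contained in the small coordinates of $u$ is $\binom{d-s}{p} \ge 1$, yielding
$$f_u \;=\; \binom{d-s}{p}\, e_u \;+\; \sum_{u'} c_{u'}\, e_{u'},$$
where the sum runs over $u' \in U$ with strictly fewer large coordinates than $u$. The base case $s=0$ gives $f_u = \binom{d}{p}\, e_u$ directly, so $e_u$ lies in the span of $\{f_v\}$. For the inductive step, solving the displayed equation for $e_u$ expresses it as a linear combination of $f_u$ and the $e_{u'}$, each of which lies in the span by the induction hypothesis.

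I do not expect any serious obstacle: the whole argument rests on the identity-block structure of $M$, after which both the positive diagonal coefficient $\binom{d-s}{p}$ and the lower-order nature of the off-diagonal terms are essentially built into the construction. The only real bookkeeping is to confirm that the points $w$ produced from a $P$ containing large coordinates of $u$ really do lie in $U$, but this is automatic since $w$ agrees with $u$ off $P$ and takes values in $[t-1]$ on $P$, so has at most $s-1 \le r-2 < r-1$ large coordinates.
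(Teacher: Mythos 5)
Your proof is correct and follows essentially the same route as the paper's: both express $f_u$, for $u \in U$, as a strictly positive multiple of $e_u$ plus terms that come earlier in some filtration of $U$, and then invert the resulting triangular system. The only differences are cosmetic: you filter by the number of large coordinates and compute the diagonal coefficient exactly as $\binom{d-s}{p}$ (using that $f_u^{(P)}=e_u$ when $P$ avoids the large coordinates of $u$), whereas the paper filters by the coordinate sum $|u|$ and only needs positivity of the coefficient, which it gets from the non-negativity of the entries of $M$.
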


\begin{proof}
Recall that $|v|$ denotes the sum of the coordinates of $v$. We shall show that for every $v \in U$, there exist constants $c_v > 0$ and $\mu_{uv} \in \RR$ such that
\begin{equation}\label{eq:C2}
 f_v \, = \, c_v e_v + \sum_{u \in U \,:\, |u| < |v|} \mu_{uv} e_u.
\end{equation}
To prove~\eqref{eq:C2}, note first that $m_{ij} \ne 0$ implies that $j \le i$, since the first $t-1$ rows of $M$ form the identity matrix, and so either $i = j \le t-1$, or $i \ge t$ and $j \le t-1$. By~\eqref{fvp}, it follows that $f^{(P)}_{v}$ (and hence $f_v$) is a linear combination of vectors $e_u$ with $|u| \le |v|$, for every $v \in [n]^d$ and every $P \subset [d]$ with $|P| = d - r + 1$. Moreover, since $u\le v$ holds coordinate-wise, we have $|u| < |v|$ unless $u = v$. 

Now, suppose that $v \in U$. Then $v$ has at least $p$
small coordinates, so we can choose some $P$ consisting only of small coordinates
of $v$. Taking $j_a=i_a$ in each sum in~\eqref{fvp}, we see that $e_v$ appears at least
once in the formula defining $f^{(P)}_{v}$. Since we chose all of the $m_{ij}$ to be non-negative, it follows that all our coefficients are non-negative. Thus $f^{(P)}_{v}$, and hence $f_v$, has a strictly positive coefficient of $e_v$, as required.

From~\eqref{eq:C2} it follows by elementary linear algebra that the vectors $\{ f_v : v \in U\}$ span $W$.
Indeed, writing $\spn{\cdot}$ for the linear span of a set of vectors, one can show by induction on $s$ 
that for each $s$, 
$$\spn{\{f_v: v\in U, |v|\le s\}}=   \spn{\{e_v: v\in U: |v|\le s\}}.$$ 
Hence the vectors $\{f_v: v\in [n]^d\}$ span $W$, as claimed.
\end{proof}

It remains only to establish the dependency condition~\eqref{dep}; the first step is to define the coefficients $\lambda_{S,v}$. For each edge $S$ of $\K(n,d,t,r)$, i.e., for each induced copy of $K_t^r$ in $[n]^d$, let $D(S)$ denote the set of coordinates in which $S$ varies, so $|D| = r$. Moreover, for each $\ell \in D(S)$ let $I_\ell = I_\ell(S)$ denote the set of values taken by the $\ell$-coordinates of points
in $S$, so $|I_\ell| = t$. Now, for each $v=(v_1,\ldots,v_d) \in S$, set
\begin{equation}\label{ladef}
 \lambda_{S,v} = \prod_{\ell \in D(S)} \lambda_{I_\ell,v_\ell},
\end{equation}
where the coefficients $\lambda_{I,i}$ are as in~\eqref{Idef}. Note that $\lambda_{S,v} \ne 0$ for all $v \in S$.

\begin{claim}
For each $S \in \K$ and each $P \subset [d]$ with $|P| = d - r + 1$, we have
\begin{equation}\label{depP}
 \sum_{v \in S} \lambda_{S,v} f^{(P)}_{v} = 0.
\end{equation}
\end{claim}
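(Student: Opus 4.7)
The plan is to prove \eqref{depP} by expanding both sides, interchanging sums, and exploiting the fact that any index $k \in P \cap D(S)$ gives rise to an inner sum that vanishes by the defining identity \eqref{Idef} of the coefficients $\lambda_{I,i}$. The key combinatorial observation is that $|P| + |D(S)| = (d-r+1) + r = d+1 > d$, so $Q := P \cap D(S)$ is non-empty; that is, every admissible pair $(S,P)$ has at least one coordinate that is both projected out in $f^{(P)}$ and varying across $S$.

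First I would fix notation: a point $v \in S$ is parametrized by $(v_\ell)_{\ell \in D(S)} \in \prod_{\ell \in D(S)} I_\ell$, with the remaining coordinates of $v$ fixed at constants determined by $S$; and the multi-index in \eqref{fvp} is $(j_k)_{k \in P} \in [t-1]^P$. The crucial point is that the projected vertex $\pi^{k_1,\ldots,k_p}_{j_1,\ldots,j_p}(v)$ depends only on the $j_k$ and on $(v_\ell)_{\ell \in D(S) \setminus P}$, and \emph{not} on $(v_k)_{k \in Q}$, since in the projection the $Q$-coordinates of $v$ are immediately overwritten by the corresponding $j_k$.

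Next I would substitute \eqref{ladef} and \eqref{fvp} into \eqref{depP}, interchange the sums over $v \in S$ and over the multi-index, and factor the combined coefficient $\prod_{\ell \in D(S)} \lambda_{I_\ell, v_\ell} \prod_{k \in P} m_{v_k, j_k}$ according to the partition of the relevant coordinates into $D(S) \setminus P$, $Q$, and $P \setminus D(S)$ (on the last piece $v_k$ is a fixed constant). Since the basis vector $e_{\pi^{k_1,\ldots,k_p}_{j_1,\ldots,j_p}(v)}$ does not depend on $(v_k)_{k \in Q}$, the sum over those coordinates pulls out as an inner factor of the form
\[
\prod_{k \in Q}\left(\sum_{i \in I_k} \lambda_{I_k, i}\, m_{i,\, j_k}\right),
\]
which vanishes by \eqref{Idef} since $Q$ is non-empty; hence the entire expression is zero.

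The only real obstacle is the bookkeeping in the middle step: one must verify carefully that the product of coefficients and basis vectors separates cleanly across the partition of the coordinates, so that the inner sum over $(v_k)_{k \in Q}$ really does factor out. Once this separation of variables is written down, the vanishing is immediate, and no further ideas are required.
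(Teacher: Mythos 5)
Your proposal is correct and uses the same essential mechanism as the paper: the observation that $P\cap D(S)\neq\emptyset$ together with the identity \eqref{Idef} applied along a coordinate in that intersection. The only difference is organizational — the paper fixes a single $k\in P\cap D(S)$ and partitions $S$ into lines in direction $k$, whereas you expand globally and factor out the sum over all of $Q=P\cap D(S)$ at once — and your separation-of-variables step does go through as claimed.
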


\begin{proof}
We shall establish~\eqref{depP} by partitioning the vertices of $S$ into lines, and showing that the sum over the $t$ vertices $v$ in any such line $L$ is zero. To do so, recall that $|P| = d - r + 1$ and $|D(S)| = r$, so $|P \cap D(S)| \ge 1$. Let $k \in P \cap D(S)$, and partition the $t^r$ vertices of $S$ into $t^{r-1}$ lines of size $t$, i.e., sets of vertices in $S$ differing only in the $k^{th}$ coordinate. 

Let $L$ be one of these lines, and observe that if $w \in L$ then $L = \{ \pi^k_i(w) : i \in I_k\}$. By~\eqref{Idef}, for every $1 \le j \le t-1$ we have $\sum_{i\in I_k} \lambda_{I_k,i} m_{i j}=0$, and hence
\begin{equation}\label{eq:C3}
\sum_{j=1}^{t-1} \sum_{i\in I_k} \lambda_{I_k,i} m_{i j} e_{ \pi^{k_1,\ldots,k_{p-1},k}_{j_1,\ldots,j_{p-1},j}(w) } \, = \, 0
\end{equation}
for every $1 \le j_1,\ldots,j_{p-1} \le t-1$ and every $k_1,\ldots,k_{p-1} \ne k$. It follows by~\eqref{fvp} that
$$\sum_{i\in I_k} \lambda_{I_k,i} f^{(P)}_{\pi^k_i(w)} \, = \, \sum_{j_1=1}^{t-1}\cdots\sum_{j_{p-1}=1}^{t-1} \bigg( \prod_{a=1}^{p-1} m_{i_a j_a} \bigg) \sum_{j=1}^{t-1} \sum_{i\in I_k} \lambda_{I_k,i} m_{i j} e_{ \pi^{k_1,\ldots,k_{p-1},k}_{j_1,\ldots,j_{p-1},j}(w) } \, = \, 0,$$
where $P = \{k_1,\ldots,k_p\}$ with $k_p = k$, and (for each $a < p$) $i_a$ is the $k_a^{th}$ coordinate of $w$, and hence of $\pi^k_i(w)$, since $k_a \ne k$.

Finally observe that, in the product~\eqref{ladef}, as $v$ ranges over the line $L$ only the factor corresponding to coordinate $k$ varies. Thus
\begin{equation}\label{end}
 \sum_{v\in L} \lambda_{S,v} f^{(P)}_{v} \, = \, \sum_{i\in I_k} \lambda_{S,\pi^k_i(w)} f^{(P)}_{\pi^k_i(w)} 
\, = \, \bigg( \prod_{\ell \in D\setminus\{k\}} \lambda_{I_{\ell},w_{\ell}} \bigg) \sum_{i\in I_k} \lambda_{I_k,i} f^{(P)}_{\pi^k_i(w)} \, = \, 0.
\end{equation}
Summing~\eqref{end} over all $r^{t-1}$ lines $L$ proves the claim. 
\end{proof}

Finally, observe that~\eqref{dep} follows immediately from Claim~3 by summing over $P$. Combining~\eqref{dep} with Claims~1 and~2, and applying Lemma~\ref{ldep}, it follows that 
$$m(\K) \, \ge \, |U| \, = \, \sum_{s=0}^{r-1}\binom{d}{s} (t-1)^{d-s} (n-t+1)^s,$$
as required. This completes the proof of Theorem~\ref{mainthm}.
\end{proof}

\section{An inhomogeneous extension}\label{discuss}

In Theorem~\ref{mainthm} we took the dimensions (that is, the side-lengths) of our `host graph' $K_n^d$ or $P_n^d$ to be all equal to $n$ purely for notational convenience. The proof is unaltered if we replace $[n]^d$ by $[n_1] \times \cdots \times [n_d]$;
we simply take a matrix $M$ with at least $\max_k n_k$ rows, and define $f^{(P)}_{v}$ exactly as before. 

In fact, there is no need to use the same matrix for different coordinate axes: the proof still works
if we use different matrices $M^{(k)}$ for different coordinates $1\le k\le d$. This allows us to consider
graphs $H$ that are grids with different dimensions in different coordinates. However, when $r<d$,
we must be careful -- we must fix numbers $t_1,\ldots,t_d$ and consider
$r$-dimensional grids $H$ with the property that if coordinate $k$ varies in $H$ then
it takes $t_k$ distinct values. If this seems unnatural, then the reader should consider only the case $r = d$, which corresponds to the result of Alon~\cite{alon} mentioned earlier.

Define $\K \, = \, \K(n_1,\ldots,n_d,t_1,\ldots,t_d,r)$ to be the hypergraph with vertex set $[n_1] \times \cdots \times [n_d]$ in which the hyperedges are all sets $S$
of the form $S = I_1 \times \cdots \times I_d$ with $I_k \subset [n_k]$ for each $k$, and either $|I_k| = 1$ or $|I_k| = t_k$, with the second case occurring for exactly $r$ coordinates $k$. Define $\P\subset \K$ similarly, but with the extra restriction
that each $I_k$ is an interval. Finally, let $U$ be the set of vertices $v = (v_1,\ldots,v_d)$
in which at most $r-1$ coordinates $v_k$ satisfy $v_k\ge t_k$. The following theorem says that the set $U$ is extremal in both $\P$ and $\K$.

\begin{thm}\label{genthm}
Let $d \ge r \ge 1$ and let $n_k \ge t_k \ge 2$ for each $1\le k \le d$. Then 
$$m(\K) \, = \, m(\P) \, = \sum_{S \subset [d] \,:\, |S| \le r-1} 
\bigg( \prod_{k \in S} \big(n_k + 1 - t_k\big) \bigg)
 \bigg( \prod_{k \in S^{\mathrm{c}}} \big( t_k - 1 \big) \bigg) .$$
\end{thm}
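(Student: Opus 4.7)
The plan is to mimic the proof of Theorem~\ref{mainthm} essentially verbatim, using a separate matrix $M^{(k)}$ for each coordinate direction, as hinted in the discussion preceding the theorem. The upper and lower bounds both require only cosmetic modifications.

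For the upper bound $m(\K) \le m(\P) \le |U|$, I would first verify that $|U|$ equals the sum in the theorem: a vertex $v \in U$ is specified by a set $S \subset [d]$ with $|S| \le r-1$ of ``large'' coordinates, with $v_k \in \{t_k,\ldots,n_k\}$ for $k \in S$ (contributing $n_k - t_k + 1$ choices each) and $v_k \in \{1,\ldots,t_k-1\}$ for $k \notin S$ (contributing $t_k - 1$ choices each). Then I would show that $U$ percolates in $\P$ by induction on $|v|=v_1+\dots+v_d$: any $v \notin U$ has at least $r$ coordinates $k$ with $v_k \ge t_k$; picking any $r$ of these and taking $I_k = \{v_k - t_k + 1,\ldots, v_k\}$ in those directions and $I_k = \{v_k\}$ in the rest yields an edge of $\P$ in which $v$ is the unique vertex of maximal coordinate sum, so all other vertices were infected earlier.

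For the lower bound I would invoke Lemma~\ref{ldep} with $W = \spn{\{e_u : u \in U\}}$. For each $1 \le k \le d$, fix an $n_k \times (t_k - 1)$ matrix $M^{(k)} = (m^{(k)}_{ij})$ with non-negative entries whose first $t_k - 1$ rows form the identity and any $t_k - 1$ of whose rows are linearly independent; such matrices exist by exactly the genericity argument used before. For any $t_k$-subset $I \subset [n_k]$ there are then non-zero coefficients $\lambda^{(k)}_{I, i}$ ($i \in I$) with $\sum_{i \in I} \lambda^{(k)}_{I, i} m^{(k)}_{ij} = 0$ for $1 \le j \le t_k - 1$. For $P = \{k_1, \ldots, k_p\} \subset [d]$ with $p = d - r + 1$ and $v \in [n_1] \times \cdots \times [n_d]$, define
$$f^{(P)}_v \, = \, \sum_{j_1 = 1}^{t_{k_1} - 1} \cdots \sum_{j_p = 1}^{t_{k_p} - 1} \bigg( \prod_{a=1}^p m^{(k_a)}_{v_{k_a},\, j_a} \bigg) \, e_{\pi^{k_1, \ldots, k_p}_{j_1, \ldots, j_p}(v)},$$
and $f_v = \sum_P f^{(P)}_v$, the sum over all $P$ of size $p$. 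For an edge $S \in \K$ with direction set $D(S)$ and value-sets $I_\ell(S) \subset [n_\ell]$ ($\ell \in D(S)$), set $\lambda_{S,v} = \prod_{\ell \in D(S)} \lambda^{(\ell)}_{I_\ell(S),\, v_\ell}$, which is non-zero.

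With these definitions, the three claims from the proof of Theorem~\ref{mainthm} carry over almost unchanged. Claim~1 holds because in $f^{(P)}_v$ each projected-out coordinate takes values at most $t_{k_a} - 1$, so each image vertex has at most $r - 1$ large coordinates. Claim~2 holds because any $v \in U$ has at least $p$ small coordinates; choosing $P$ among them makes $e_v$ appear with a strictly positive coefficient, while every other $e_u$ appearing satisfies $u \le v$ coordinate-wise and $u \ne v$, hence $|u| < |v|$, so induction on $|v|$ gives spanning. For Claim~3, fix an edge $S$ and a projection $P$; since $|P| + |D(S)| = d + 1$, we may pick $k \in P \cap D(S)$, partition $S$ into $t^{r-1}$ lines in direction $k$, and apply the dependency $\sum_{i \in I_k(S)} \lambda^{(k)}_{I_k(S), i} m^{(k)}_{ij} = 0$ to the vertices of each line to conclude $\sum_{v \in S} \lambda_{S,v} f^{(P)}_v = 0$. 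Summing over $P$ gives~\eqref{dep}, and Lemma~\ref{ldep} yields $m(\K) \ge |U|$. I expect no genuine obstacle: the argument is structurally identical to the one for Theorem~\ref{mainthm}, the only change being that each matrix entry and coefficient now carries a superscript $(k)$ indicating which coordinate direction it refers to. The main care required is purely notational bookkeeping.
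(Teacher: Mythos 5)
Your proposal is correct and is essentially identical to the paper's own proof, which likewise just substitutes a matrix $M^{(k)}$ for each coordinate direction and observes that the rest of the argument for Theorem~\ref{mainthm} goes through unchanged. (The only quibble is notational: in the inhomogeneous setting the number of lines in Claim~3 is $\prod_{\ell \in D(S)\setminus\{k\}} t_\ell$ rather than $t^{r-1}$, which changes nothing.)
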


\begin{proof}
The theorem follows by a simple adaptation of the proof of Theorem~\ref{mainthm} above, taking the $k^{th}$ coordinate
of $v = (v_1,\ldots,v_d)$ to be large if $v_k \ge t_k$.  In place of the matrix $M$, we choose an $n_k \times (t_k-1)$
matrix $M^{(k)}$ for each coordinate $k$, and replace~\eqref{fvp} by
\[
 f^{(P)}_{v} = \sum_{j_1=1}^{t_{k_1}-1}\cdots\sum_{j_p=1}^{t_{k_p}-1} \prod_{a=1}^p m^{(k_a)}_{i_a j_a} e_{ \pi^{k_1,\ldots,k_p}_{j_1,\ldots,j_p}(v) }.
\]
The remainder of the proof is identical.
\end{proof}

\section*{Acknowledgements}

The authors would like to thank an anonymous referee for pointing out that Theorem~3.4 from~\cite{alon} implies Corollary~\ref{aloncor}.

\end{document}